\documentclass[12pt]{article}
\usepackage{mathrsfs}
\usepackage{epic,eepic,epsf,epsfig}
\usepackage{amsfonts,srcltx,mathrsfs}
\usepackage{multirow}
\usepackage{amsmath}
\usepackage{amssymb}
\usepackage{amsbsy}
\usepackage{graphicx}
\usepackage{amsfonts}
\usepackage{color}
\usepackage{setspace}

\newtheorem{lem}{Lemma}[section]%
\newtheorem{theorem}[lem]{Theorem}%

\def\nd{\mathrel{\bigm|\kern-.7em/}}

\def\f{\noindent}

\def\P\GammaL{\hbox{\rm P\GammaL}}

\def\mod{\hbox{\rm mod }}

\begin{document}
\title{Spectral radius and maximum matching covered
graphs with bounded matching number}

\footnotetext{* Corresponding author}
\footnotetext{E-mails: 13021531326@163.com; zhangwq@pku.edu.cn}

\author{Xinying Tang, Wenqian Zhang*\\
{\small School of Mathematics and Statistics, Shandong University of Technology}\\
{\small Zibo, Shandong 255000, P.R. China}}
\date{}
\maketitle

\begin{abstract}
Let $G$ be a graph. The {\em spectral radius} of $G$ is the largest eigenvalue of its {\em adjacency matrix}. A {\em matching} of $G$ is a set of disjoint edges of $G$. The {\em matching number} of $G$ is the size of a maximum matching (i.e., a matching with  maximum edges). The graph $G$ is called {\em maximum  matching covered} if each edge of $G$ is contained in a maximum matching. In this paper, we give a sharp spectral radius condition for graphs with bounded matching number to be maximum matching covered.
\bigskip

\f {\bf Keywords:} eigenvalue; spectral radius; matching number; maximum matching covered graph.\\
{\bf 2020 Mathematics Subject Classification:} 05C50.

\end{abstract}

 \baselineskip 17 pt

\section{Introduction}

All graphs considered in this paper are finite, undirected and simple. We first introduce some elementary symbols.
Let $G$ be a graph. The vertex set and edge set of $G$ are denoted by $V(G)$ and $E(G)$, respectively. Set $e(G)=|E(G)|$. For a vertex $u$, let $N_{G}(u)$ denote the set of neighbors of $u$ in $G$, and let $d_{G}(u)=|N_{G}(u)|$. For a $S\subseteq V(G)$, let $G[S]$ denote the subgraph induced by $B$, and  let $G-S=G[V(G)-S]$. For any two disjoint subsets $U,W$ of $V(G)$, let $e_{G}(U,W)$ denote the number of edges between $U$ and $W$ in $G$. For any two graphs $G_{1}$ and $G_{2}$, let $G_{1}\cup G_{2}$ be the {\em disjoint union} of them, and let $G_{1}\vee G_{2}$ be the {\em join} of them (i.e., the graph obtained from $G_{1}$ and $G_{2}$ by connecting each vertex of $G_{1}$ to each vertex of $G_{2}$). For an integer $t\geq1$, let $tG_{1}$ denote the disjoint union of $t$ copies of $G_{1}$. Let $K_{t}$ denote the complete graph of order $t$. For any terminology used but not defined here, one may refer to \cite{BH1,CRS}.

Let $G$ be a graph with vertices $u_{1},u_{2},\ldots,u_{n}$. The {\em adjacency matrix} $A(G)=(a_{ij})_{n\times n}$ of $G$ is an $n\times n$ square matrix, where $a_{ij}=1$ if $u_{i}$ is adjacent to $u_{j}$, and $a_{ij}=0$ otherwise. The eigenvalues of $G$ are the eigenvalues of its adjacency matrix $A(G)$. The {\em spectral radius} $\rho(G)$ of $G$ is the largest eigenvalue of $G$. By Perron-Frobenius Theorem (see \cite{CRS}), $\rho(G)$ has  non-negative eigenvectors (called {\em Perron vectors}).  If $G$ is connected, then $\rho(G)$ is simple and its Perron vector is positive.

Let $G$ be  a graph of order $n$. A {\em matching} $M$ is a set of disjoint edges of $G$. $M$ is called {\em perfect} if $M$ has $\frac{n}{2}$ edges, and is called {\em near perfect} if $M$ has $\frac{n-1}{2}$ edges. A {\em maximum matching} of $G$ is a matching with the maximum edges. The {\em matching number} of $G$ is the size of a maximum matching. The graph
$G$ is called {\em maximum matching covered}, if each edge of $G$ is contained in a maximum matching.

A connected graph $G$ is called {\em factor-critical}, if $G-u$ contains perfect matchings for any $u\in V(G)$. $G$ is called {\em 1-extendable} (or {\em matching covered}), if $G$ has perfect
matchings and each edge of $G$ is contained in a perfect matching. Clearly, a maximum matching covered graph  is also 1-extendable if it has perfect matchings.  Mkrchyan \cite{M}
proved that minimal maximum matching covered graphs without isolated vertices contain 
perfect matchings. The study on matching extensions can be traced back to 1964
when Hetyei \cite{H} studied bipartite 1-extendable graphs. A
necessary and sufficient condition for graphs to be 1-extendable was given by  Little, Grant and Holton \cite{LGH}. It seems to be a hot topic to study 
the structure of 1-extendable graphs (see \cite{CLM,CLM1,CFLLZ,LCKM}). 

It seems to be popular for the study of matchings of graphs in terms of spectral radius (see \cite{FYZ,FL,KOSS,LLNW,MLW,MDW,NLW,O,ZSZ,Z}). In particular, Miao, Li and Wei \cite{MLW} gave a spectral radius condition for graphs to be 1-extendable. Very recently, Niu, Lian and Wang \cite{NLW} gave a spectral radius condition for graphs without perfect matchings to be maximum matching covered. Their result can be read as follows.

\begin{theorem}{\rm (\cite{NLW})}\label{1}
Let $G$ be a connected graph of order $n$ with $n\geq5$, and $G$ contains no perfect
matchings. When $n$ is an even integer, the following statements hold.\\
$(i)$ If $n \leq 12$ and $\rho(G)\geq \rho(K_{\frac{n - 2}{2}} \vee \frac{n + 2}{2} K_{1})$, then $G$ is maximum matching covered unless $G=K_{\frac{n - 2}{2}} \vee \frac{n + 2}{2} K_{1}$.\\
 $(ii)$ If $ n \geq 14 $, if $\rho(G)\geq \rho( K_{2} \vee (K_{n - 5} \cup 3 K_{1}))$, then $G$ is maximum matching covered unless $G= K_{2} \vee (K_{n - 5} \cup 3 K_{1})$.\\
 When $n$ is an odd integer, the following statements hold.\\
$(i)$ If $n \leq 9$ and $\rho(G)\geq \rho(K_{\frac{n - 1}{2}} \vee \frac{n + 1}{2} K_{1})$, then $G$ is maximum matching covered unless $G=K_{\frac{n - 1}{2}} \vee \frac{n + 1}{2} K_{1}$.\\
 $(ii)$ If $ n \geq 11 $, if $\rho(G)\geq \rho( K_{2} \vee (K_{n - 4} \cup 2 K_{1}))$, then $G$ is maximum matching covered unless $G= K_{2} \vee (K_{n - 4} \cup 2 K_{1})$.
\end{theorem}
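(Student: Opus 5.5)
We argue by contradiction. Let $G$ be connected of order $n$ with no perfect matching, with $\rho(G)$ at least the stated threshold, and assume $G$ is not maximum matching covered. We use a structural characterization of such graphs through the Gallai--Edmonds decomposition: some edge $e=uv$ lies in no maximum matching, so $\nu(G-u-v)=\nu(G)-2$. Using the Gallai--Edmonds structure theorem (or a Tutte--Berge argument), we look for a set $S\subseteq V(G)$ such that $G-S$ has $o(G-S)$ odd components with $o(G-S)-|S|=\operatorname{def}(G)$, where $\operatorname{def}(G)=n-2\nu(G)\ge 2$ for even $n$ and $\ge 1$ for odd $n$, and where $e$ either has both ends in $S$ or joins $S$ to an even component. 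After enlarging $G$ by adding edges while keeping this barrier, which does not decrease $\rho$ by Perron--Frobenius monotonicity, $G$ becomes a spanning subgraph of one of a small family of extremal graphs. The generic member is
\[
H(s,n_1,\dots,n_q)=K_s\vee\bigl(K_{n_1}\cup\cdots\cup K_{n_q}\bigr),\qquad q=s+\operatorname{def},
\]
with every $n_i$ odd, except that the edge $e$ must still be excluded from all maximum matchings. For example, if $e$ lies inside $S$, we need $S$ still to be a tight barrier, so $s\ge 2$. This is why the case $s\ge 2$ with a defect of $2$ or $1$ produces $K_2\vee(K_{n-5}\cup 3K_1)$, where $s=2$, $q=4$, $n$ even, and $K_2\vee(K_{n-4}\cup 2K_1)$, where $s=2$, $q=3$, $n$ odd. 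We must separately rule out $s\le 1$: with $s=0$ or $s=1$, every edge of the barrier-type graph lies in a maximum matching, or connectivity fails.

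Next we optimize inside this family. By the standard transfer lemma (moving vertices from small cliques to the largest clique strictly increases $\rho$, compared via the Perron vector or the equitable quotient matrix), $\rho(G)\le\rho(K_s\vee(K_{n-s-q+1}\cup(q-1)K_1))$. The defect should be taken minimal: $q=s+2$ for even $n$ and $q=s+1$ for odd $n$, since larger defect only makes the graph sparser. This leaves the one-parameter family $G_s=K_s\vee(K_{n-2s-1}\cup(s+1)K_1)$ for even $n$ and $G_s=K_s\vee(K_{n-2s}\cup sK_1)$ for odd $n$, with $2\le s\le \frac{n-2}{2}$ and $2\le s\le\frac{n-1}{2}$ respectively. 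The endpoints of these ranges are exactly the two extremal graphs of the theorem.

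The main obstacle is comparing $\rho(G_s)$ across $s$: we must show the maximum is attained at an endpoint, and determine which endpoint wins according to $n$. We plan to use the equitable partition $\{S,\ \text{big clique},\ \text{isolated part}\}$ to write $\rho(G_s)$ as the largest root of a cubic $f_s(x)$. Convexity-type behavior in $s$ then follows the standard route of Fan--Lin--Lu type arguments. One compares each interior $s$ with $s=2$ using crude bounds such as $\rho(G_s)<n-s$-ish estimates against $\rho(G_2)>n-4$. The remaining finite cases are settled by checking the sign of $f_2(x)-f_{s_{\max}}(x)$ at the relevant roots, which produces the thresholds $n\le 12$ versus $n\ge14$ and $n\le9$ versus $n\ge 11$. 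Equality forces $G$ to equal the extremal graph itself, since a proper spanning subgraph of a connected graph has strictly smaller spectral radius.
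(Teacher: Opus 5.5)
\textbf{The reduction to barriers of size at least $2$ has a gap.} Your outline is close to the paper's: find a barrier, complete the pieces into cliques, transfer vertices to the largest clique, then compare a one-parameter family. The hole is in how you discard $s\le 1$.

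When $s=1$ there is no room for the uncovered edge $e$ inside $S$, so $e$ must join the barrier vertex to an even component. In Gallai--Edmonds terms this is $|B(G)|=1$ and $C(G)\neq\emptyset$. The corresponding configuration $K_1\vee(K_{n_1}\cup\cdots\cup K_{n_q}\cup K_c)$, with all $n_i$ odd, $c\ge2$ even and $q=1+\mathrm{def}(G)$, is connected and \emph{not} maximum matching covered, because the edges from the barrier vertex to $K_c$ lie in no maximum matching. An example for even $n$ is $K_1\vee(K_{n-5}\cup 2K_1\cup K_2)$. So your claim that for $s\le1$ every edge lies in a maximum matching or connectivity fails is false, and these graphs are never bounded in your argument. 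Nor can you absorb $K_c$ into an odd clique while adding edges: $e$ would then become coverable, losing the property you said must be preserved.

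The missing idea is exactly the paper's: move a vertex $u\in C(G)$ into the barrier. Put $S'=B(G)\cup\{u\}$. Then $|S'|\ge2$, and $G-S'$ has at least $q+1\ge|S'|+\mathrm{def}(G)$ components; deleting $u$ from an even component leaves an odd one, so these can even be taken odd. The paper then works with the class $\mathcal{G}_{n,k}$ defined by $|S|\ge2$ and $c(G-S)\ge|S|+k$, counting \emph{all} components. This treats $C(G)\neq\emptyset$ and ``$B(G)$ not independent'' uniformly, removes parity bookkeeping from the transfer step, and means you never have to track a particular uncovered edge while adding edges. It suffices that the unique maximizer over $\mathcal{G}_{n,k}$ is itself not maximum matching covered.

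\textbf{The comparison across $s$ is not yet an argument, and the suggested bound fails where it matters.} For $s=s_{\max}$ one has $\rho(G_s)\approx\frac{1+\sqrt5}{4}n$, far above $n-s$. For example, at $n=14$, $\rho(K_6\vee 8K_1)=\frac{10+\sqrt{868}}{4}\approx9.87>8=n-s$. At $n=14$ the winner $\rho(K_2\vee(K_9\cup3K_1))\approx10.11$ beats it only narrowly. The endpoint $s_{\max}$ must be handled for infinitely many $n$, so ``finitely many remaining cases'' does not cover it, and no convexity in $s$ is established.

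The paper needs no convexity. Let $\Phi_{H_b}$ be the characteristic polynomial of the quotient matrix of $H_b=K_b\vee(K_{n-k-2b+1}\cup(k+b-1)K_1)$, and let $G_0=K_{\frac{n-k}{2}}\vee\frac{n+k}{2}K_1$.
\begin{itemize}
\item For $n\ge3k+8$ it shows $\Phi_{H_b}(x)-\Phi_{H_2}(x)=(b-2)q(x)$ with $q(x)>0$ for all $x>n-k-2$ and all $3\le b\le\frac{n-k}{2}$. It then uses $\rho(H_2)>n-k-2$, since $K_{n-k-1}\subset H_2$.
\item For $n\le3k+6$ it writes $\Phi_{H_b}=(x-\frac{n-k-2b}{2})\Phi_{G_0}+\frac{n-k-2b}{8}h(x)$ with $h(x)>0$ for $x>n-k-2$, and uses $\rho(G_0)>n-k-2$.
\end{itemize}
Your last step needs some estimate of this kind that is uniform in $s$.
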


Motivated by Theorem \ref{1}, it is natural and interesting to ask the spectral radius conditions for graphs with bounded matching number to be maximum matching covered. The main result of this paper is the following Theorem \ref{main}. 

\begin{theorem}\label{main}
Assume that $k\geq1,n\geq k+4$ and $n\equiv k(\mod2)$. Let $G$ be a connected graph of order $n$ with matching number at most $\frac{n-k}{2}$. Then the following conclusions hold.\\
$(i)$ For $n \leq 3k + 6$, if $\rho(G)\geq \rho(K_{\frac{n - k}{2}} \vee \frac{n + k}{2} K_{1})$, then $G$ is maximum matching covered unless $G=K_{\frac{n - k}{2}} \vee \frac{n + k}{2} K_{1}$.\\
 $(ii)$ For $ n \geq 3k + 8 $, if $\rho(G)\geq \rho( K_{2} \vee (K_{n - k - 3} \cup (k + 1) K_{1}))$, then $G$ is maximum matching covered unless $G= K_{2} \vee (K_{n - k - 3} \cup (k + 1) K_{1})$.
\end{theorem}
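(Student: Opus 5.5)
The plan is to argue by contradiction via a structural characterization of graphs that are not maximum matching covered, and then to compare spectral radii of a few extremal families. Suppose $G$ is connected of order $n$, has matching number $\nu(G)\le \frac{n-k}{2}$, and is not maximum matching covered. Then some edge $e=xy$ lies in no maximum matching.

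\textbf{Step 1: structure.} We use the Gallai--Edmonds decomposition $V(G)=D\cup A\cup C$. Here $D$ is the set of vertices missed by some maximum matching, $A=N(D)\setminus D$, and $C$ is the rest. We use three standard facts:
\begin{itemize}
\item every maximum matching matches $A$ into distinct components of $G[D]$;
\item every component of $G[D]$ is factor-critical, and $G[C]$ has a perfect matching;
\item an edge lies in some maximum matching unless it is an edge inside $A$, an edge from $A$ to $C$, or it lies inside $C$ but in no perfect matching of $G[C]$.
\end{itemize}
Set $s=|A|$ and let $q$ be the number of odd components of $G-A$, which are exactly the components of $G[D]$. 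Then $q-s=n-2\nu\ge k$. The existence of the bad edge $e$ yields a set $S$ with $|S|=s$ and $o(G-S)\ge s+k$ together with an extra edge that is ``wasted''. One of two situations occurs:
\begin{itemize}
\item[(a)] $e$ has both ends in $S$, or one end in $S$ and the other in an even part, so $G$ is a spanning subgraph of $K_s\vee(K_{n_1}\cup\cdots\cup K_{n_{s+k}}\cup K_{\text{even}})$ with the even part nonempty or $s\ge 1$;
\item[(b)] $e$ lies in the even part $C$ and is in no perfect matching of $G[C]$.
\end{itemize}
Case (b) can be reduced to case (a) by applying the Little--Grant--Holton barrier criterion inside $G[C]$. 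This produces a barrier $T\subseteq C$ containing an end of $e$, and enlarging $S$ to $S'=S\cup T$ keeps $o(G-S')\ge |S'|+k$. Either way we obtain the following: there is a nonempty $S$ such that $G-S$ has at least $|S|+k$ odd components, and some edge is not usable. In the extremal situation this means that either $|S|\ge 2$, or $|S|=1$ with an extra even component.

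A cleaner way to state the conclusion is that $G$ is a spanning subgraph of $K_s\vee(K_{n_1}\cup\cdots\cup K_{n_{s+k}})$ with $s\ge 2$, and $\sum n_i=n-s$ with all $n_i$ odd. The case $s=1$ is excluded: there every edge is in a maximum matching, since each edge at the cut vertex can be completed by near-perfect matchings of the factor-critical pieces. Verifying the $s\ge2$ claim carefully, including the $s=0$ and $s=1$ exclusions, is the \emph{main obstacle}. The subtle part is checking that the two extremal graphs are genuinely not maximum matching covered. For $K_s\vee\frac{n+k}{2}K_1$ with $s=\frac{n-k}{2}$, the edges inside $K_s$ lie in no maximum matching. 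For $K_2\vee(K_{n-k-3}\cup(k+1)K_1)$, the edge of $K_2$ lies in no maximum matching.

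\textbf{Step 2: spectral comparison.} Since adding edges strictly increases $\rho$ for connected graphs, we have $\rho(G)\le\rho(K_s\vee(K_{n_1}\cup\cdots\cup K_{n_{s+k}}))$, with equality only if $G$ is that graph. By the standard Kelmans-type transformation (moving vertices from a smaller clique to a larger one increases $\rho$), this is at most $\rho(K_s\vee(K_{n-2s-k+1}\cup(s+k-1)K_1))=:\rho(s)$, where $2\le s\le\frac{n-k}{2}$.

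Next we compute the equitable quotient matrix with parts $K_s$, $K_{n-2s-k+1}$, and the isolated vertices. This gives a cubic $f_s(x)$ whose largest root is $\rho(s)$. Following the method of Fan--Lin--Lu and of \cite{NLW}, we show that $\rho(s)$, as a function of $s$ on $[2,\frac{n-k}{2}]$, is maximized at an endpoint. To do this we bound $\rho(s)$ for interior $s$ by estimates such as $n-s-k<\rho(s)<n-s-k+1$, and compare with each endpoint.

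Finally we compare the two endpoints. At $s=2$ the graph is $K_2\vee(K_{n-k-3}\cup(k+1)K_1)$, and at $s=\frac{n-k}{2}$ it is $K_{\frac{n-k}{2}}\vee\frac{n+k}{2}K_1$. We subtract the characteristic polynomials of the two quotient matrices and evaluate at the larger root. The sign changes exactly between $n=3k+6$ and $n=3k+8$, which yields the threshold in parts (i) and (ii). The equality cases follow from strict monotonicity at each reduction step.
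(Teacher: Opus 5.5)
Your skeleton is the paper's: Gallai--Edmonds, then a set $S$ with $|S|\ge 2$ and at least $|S|+k$ components in $G-S$, then saturating to $K_s\vee(\cdots)$, moving vertices into one clique to reach $H_s=K_s\vee(K_{n-2s-k+1}\cup(s+k-1)K_1)$, and comparing over $s$. However, two steps fail as written.

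\textbf{Reduction to $|S|\ge 2$.} Your claim that $s=1$ is excluded is false when $C\neq\emptyset$, because an edge from $A$ to $C$ is never in a maximum matching. Example: let $a$ be adjacent to $k+1$ pendant vertices and to one end $c$ of an edge $cc'$. Then $n=k+4$, the matching number is $2=\frac{n-k}{2}$, $A=\{a\}$, $C=\{c,c'\}$, and $ac$ lies in no maximum matching. Your case (a) with one end in $A$ and the other in $C$ produces exactly such graphs, and nothing in the proposal converts them to $|S|\ge 2$.

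The paper's fix is immediate. If $C\ne\emptyset$, connectivity forces $|A|\ge 1$ and parity gives $|C|\ge 2$. Then $S=A\cup\{u\}$ with $u\in C$ has $|S|\ge 2$, and $G-S$ has the $q\ge |A|+k$ components of $G[D]$ plus at least one more inside $C-u$. That extra component can even be taken odd, since $|C-u|$ is odd. If $C=\emptyset$, the bad edge lies inside $A$, so $|A|\ge 2$ and $S=A$ works. This also makes your barrier argument for case (b) unnecessary.

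\textbf{Comparison over $s$.} The estimate $n-s-k<\rho(s)<n-s-k+1$ is false for interior $s$. For $k=1$, $n=11$, $s=4$, the graph $H_4=K_4\vee(K_3\cup 4K_1)$ has quotient polynomial $x^3-5x^2-22x+32$, which equals $-24$ at $x=7$. Hence $\rho(H_4)>7=n-s-k+1$. For $s$ near $\frac{n-k}{2}$ the join with the independent set dominates, and $\rho(s)$ is far above $n-s-k+1$.

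So you have no argument that interior values of $s$ never win, and this is exactly where the threshold comes from. Comparing the two endpoints alone does not suffice. The paper proves no unimodality. Instead it compares every $H_b$ directly with the winning endpoint on the ray $x>n-k-2$.

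For $n\le 3k+6$ it writes
\[
\Phi_{H_b}(x)=\Bigl(x-\tfrac{n-k-2b}{2}\Bigr)\Phi_{G_0}(x)+\tfrac{n-k-2b}{8}\bigl[(4k+4b-4)x+8b(b+k-1)-n^2+k^2\bigr].
\]
It then shows the bracket is positive for $x>n-k-2$ and that $\rho(G_0)>n-k-2$; both facts use $n\le 3k+6$.

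For $n\ge 3k+8$ it uses $\Phi_{H_b}-\Phi_{H_2}=(b-2)q(x)$, where the quadratic $q$ is increasing beyond $n-k-2$ and positive at $n-k-2$. This is combined with $\rho(H_2)>\rho(K_{n-k-1})=n-k-2$.

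Your Step 2 needs a uniform argument of this kind.
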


Clearly, Theorem \ref{main} extends Theorem \ref{1} (by letting $k=2$ for even $n\geq6$, and letting $k=1$ for odd $n\geq5$).

The rest of the paper is organized as follows. In Section 2, we include some lemmas on spectral radius of graphs. In Section 3, we prove a useful lemma, which will be used in the proof of Theorem \ref{main}. In Section 4, we give the proof of Theorem \ref{main}.

\section{Spectral tools }

To prove the Theorem \ref{main}, we first include some lemmas. The first one in the following is taken from Theorem 2.2.1 of \cite{BH1}.

\begin{lem}(\cite{BH1})\label{subgraph}
Let $G$ be a connected graph, and let $H$ be a subgraph of $G$. Then $\rho(H)\leq\rho(G)$, and equality holds if and only if $H=G$.
\end{lem}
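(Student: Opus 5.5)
The plan is to compare Rayleigh quotients, using the Perron--Frobenius facts recalled in the introduction. First I will put $A(H)$ and $A(G)$ on the same index set. Let $n=|V(G)|$ and regard $H$ as a subgraph on the vertex set $V(G)$ by adding the vertices of $V(G)\setminus V(H)$ as isolated vertices. This does not change the spectral radius, since it only adds zero rows and columns. Call the resulting $n\times n$ matrix $B$. Because $E(H)\subseteq E(G)$, we have $0\le B\le A(G)$ entrywise.

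For the inequality, take a nonnegative unit Perron vector $y$ of $B$, so $By=\rho(H)y$. Since $y\ge 0$ and $A(G)-B\ge0$ entrywise, and by the Rayleigh principle for the symmetric matrix $A(G)$,
$$\rho(H)=y^{T}By\le y^{T}A(G)y\le \rho(G).$$

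For the equality case, suppose $\rho(H)=\rho(G)$. Then both inequalities above are equalities.
\begin{itemize}
\item Equality in the Rayleigh bound $y^{T}A(G)y=\rho(G)$ means $y$ is an eigenvector of $A(G)$ for $\rho(G)$.
\item Since $G$ is connected, $\rho(G)$ is simple and its Perron vector is positive. Hence $y$ is a positive multiple of the Perron vector of $G$, so every coordinate of $y$ is strictly positive.
\item Equality in the first step gives $y^{T}(A(G)-B)y=0$. This is a sum of nonnegative terms $2y_iy_j$, one for each edge $ij\in E(G)\setminus E(H)$, and each term is strictly positive because $y>0$. Therefore $E(G)\setminus E(H)=\emptyset$.
\item The positivity of $y$ also excludes any padded vertex. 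Such a vertex would be isolated in $B$, and if $\rho(H)>0$ the eigen-equation $By=\rho(H)y$ would force its coordinate to be $0$. If $\rho(G)=\rho(H)=0$, then $G$ is connected with no edges, so $G=K_1$, and the claim is trivial.
\end{itemize}
Together these give $V(H)=V(G)$ and $E(H)=E(G)$, that is, $H=G$. The converse direction is immediate.

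The only delicate step is the use of connectivity of $G$ in the equality case: it is what upgrades a nonnegative eigenvector to a strictly positive one. Without positivity, edges of $G$ meeting a zero coordinate could be deleted without changing $\rho$. So the argument must route the equality case through the uniqueness and positivity of the Perron vector of the connected graph $G$, not through that of $H$, which need not be connected.
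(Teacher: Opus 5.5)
Your proof is correct. The paper does not prove this lemma; it imports it from Brouwer--Haemers (Theorem 2.2.1), the Perron--Frobenius theorem for irreducible nonnegative matrices. Its comparison clause says that if $0\le S\le T$ with $T$ irreducible, then every eigenvalue $\sigma$ of $S$ satisfies $|\sigma|\le\rho(T)$, with equality only if $S=T$.

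The standard argument for that clause goes as follows. Take $Sz=\sigma z$ and observe that $|\sigma|\,|z|\le S|z|\le T|z|$ entrywise. Then pair with a positive left Perron vector of $T$. In the equality case, $|z|$ is forced to be a positive Perron vector of $T$ with $(T-S)|z|=0$, hence $S=T$.

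Your route replaces that pairing with the Rayleigh principle for the symmetric matrix $A(G)$. What this buys is elementarity. For undirected graphs you need only the spectral theorem, plus simplicity and positivity of the Perron vector of a connected graph, which the paper already recalls in its introduction. What the textbook route buys is generality. It never uses symmetry, so it applies verbatim to digraphs and to arbitrary irreducible nonnegative matrices.

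Both arguments rest on the point you flagged: it is positivity of the Perron vector of the connected graph $G$, not of $H$, that turns equality of spectral radii into equality of graphs.

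One small simplification is available. Once you have $E(G)\setminus E(H)=\emptyset$, connectivity alone gives $V(H)=V(G)$ when $|V(G)|\ge2$. Every vertex of $G$ lies on an edge of $G$, hence on an edge of $H$. So your eigen-equation argument for padded vertices is valid but not needed.
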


For a matrix (or a vector) $M$, let $M'$ denote the {\em transpose} of $M$. The following lemma is a variation of Theorem 8.1.3 of \cite{CRS}.

\begin{lem}(\cite{CRS})\label{eigenvector trans}
Let $G$ be a connected graph with a Perron vector $\mathbf{x}=(x_{1},x_{2},\ldots,x_{n})'$. Assume that $U$ and $W$ are two disjoint subsets of $V(G)$. Let $v\in V(G)-W$ such that there are no edges between $v$ and $W$. Let $G^{'}$ be the graph obtained from $G$ by deleting the edges between $v$ and $U-v$, and adding all the edges between $v$ and $W$. If $\sum_{u\in U}x_{u}\leq\sum_{w\in W}x_{w}$, then $\rho(G^{'})>\rho(G)$.
\end{lem}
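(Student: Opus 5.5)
The plan is a Rayleigh quotient comparison: show $\rho(G')\geq\rho(G)$ using the Perron vector $\mathbf{x}$ of $G$ as a test vector, then rule out equality by looking at the eigen-equation at a vertex of $W$. Throughout, normalize $\mathbf{x}$ so that $\mathbf{x}'\mathbf{x}=1$. Since $G$ is connected, $\mathbf{x}$ is positive, and $\mathbf{x}'A(G)\mathbf{x}=\rho(G)$.

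\textbf{Step 1: compare the quadratic forms.} Let $A=A(G)$ and $A'=A(G')$. Passing from $G$ to $G'$ affects only edges at $v$:
\begin{itemize}
\item the removed edges are $vu$ with $u\in (U-v)\cap N_G(v)$;
\item the added edges are $vw$ for all $w\in W$, and these are genuinely new because $v\notin W$ and $v$ has no neighbours in $W$.
\end{itemize}
Hence
\[
\mathbf{x}'A'\mathbf{x}-\mathbf{x}'A\mathbf{x}
=2x_v\Bigl(\sum_{w\in W}x_w-\sum_{u\in (U-v)\cap N_G(v)}x_u\Bigr)
\geq 2x_v\Bigl(\sum_{w\in W}x_w-\sum_{u\in U}x_u\Bigr)\geq 0 .
\]
The middle inequality uses nonnegativity of $\mathbf{x}$, and the last one is the hypothesis. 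By the Rayleigh principle,
\[
\rho(G')\geq \mathbf{x}'A'\mathbf{x}\geq \mathbf{x}'A\mathbf{x}=\rho(G).
\]

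\textbf{Step 2: strictness.} Suppose $\rho(G')=\rho(G)$. Then $\mathbf{x}$ attains the maximum of the Rayleigh quotient of the symmetric matrix $A'$, so it is an eigenvector: $A'\mathbf{x}=\rho(G)\mathbf{x}$.

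We may assume $W\neq\emptyset$. Otherwise the hypothesis $\sum_{u\in U}x_u\leq 0$, together with positivity of $\mathbf{x}$, forces $U=\emptyset$, so $G'=G$ and the statement is vacuous. This non-degeneracy is implicit in the lemma and I would state it explicitly.

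Pick $w\in W$. Its neighbourhood in $G'$ is its neighbourhood in $G$ plus the new neighbour $v$; the edge $vw$ is the only change at $w$, since $w\notin U$. Therefore
\[
\rho(G)x_w=(A'\mathbf{x})_w=(A\mathbf{x})_w+x_v=\rho(G)x_w+x_v .
\]
This gives $x_v=0$, contradicting positivity of $\mathbf{x}$. Hence $\rho(G')>\rho(G)$.

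\textbf{Main obstacle.} The only delicate point is strictness. Note that $G'$ may be disconnected, so one cannot invoke uniqueness of its Perron vector. The argument avoids this by using only the fact that a maximizer of the Rayleigh quotient is an eigenvector, and then reading off the coordinate at a vertex $w\in W$. It also relies on $w\neq v$ and $w\notin U$, which follow from $v\notin W$ and $U\cap W=\emptyset$.
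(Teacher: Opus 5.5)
Your proof is correct and is essentially the standard Rayleigh-quotient argument behind the result the paper cites without proof (Theorem 8.1.3 of Cvetkovi\'c, Rowlinson and Simi\'c): you compare $\mathbf{x}'A(G')\mathbf{x}$ with $\mathbf{x}'A(G)\mathbf{x}$, then rule out equality by reading the eigen-equation of $A(G')$ at a vertex of $W$. One correction to your wording: for $W=\emptyset$ the lemma is not vacuous but false, since $U=\emptyset$ is forced and then $G'=G$. So your added hypothesis $W\neq\emptyset$ is genuinely required; it holds in the paper's application, where $W=V(Q_{1})$.
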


For a graph $G$ and a partition $V_{1},V_{2},...,V_{m}$ of $V(G)$, the {\em quotient matrix} of this partition is the $m\times m$ matrix $(b_{ij})$, where $b_{ij}=\frac{e_{G}(V_{i},V_{j})}{|V_{i}|}$ for $i\neq j$, and $b_{ii}=\frac{2e(G[V_{i}])}{|V_{i}|}$ for $1\leq i\leq m$. The partition is {\em equitable} if each vertex in $V_{i}$ has the same number of neighbors in $V_{j}$ for any $1\leq i,j\leq m$. The following theorem is on eigenvalue interlacing technique (see \cite{BH1}, Chapter $3$).

\begin{theorem}(\cite{BH1})\label{interlacing}
Let $G$ be a graph on $n$ vertices and let $Q$ be the $m\times m$ quotient matrix of a partition $V_{1},V_{2},...,V_{m}$ of $V(G)$. If the partition is equitable, then $\rho(G)=\rho(Q)$, where $\rho(Q)$ is the largest eigenvalue of $Q$.
\end{theorem}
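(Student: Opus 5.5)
The plan is to encode the partition by its characteristic matrix and turn equitability into a matrix identity. This identity carries nonnegative eigenvectors back and forth between $A=A(G)$ and $Q$, giving $\rho(Q)\le\rho(G)$ and $\rho(G)\le\rho(Q)$ separately.

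Let $S$ be the $n\times m$ characteristic matrix of the partition, where $S_{uj}=1$ if $u\in V_{j}$ and $S_{uj}=0$ otherwise. First I will check that equitability gives $AS=SQ$. Let $b_{ij}$ be the common number of neighbours in $V_j$ of a vertex of $V_i$. Then $e_G(V_i,V_j)=|V_i|b_{ij}$ for $i\neq j$, and $2e(G[V_i])=|V_i|b_{ii}$. So the entries of $Q$ as defined in the paper are exactly these common neighbour counts. For $u\in V_i$, the $(u,j)$ entry of $AS$ is $|N_G(u)\cap V_j|=b_{ij}$, and the $(u,j)$ entry of $SQ$ is also $b_{ij}$. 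Since $A$ is symmetric, transposing gives $S'A=Q'S'$.

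\emph{Step 1: $\rho(Q)\le\rho(G)$.} The matrix $Q$ is entrywise nonnegative, so Perron--Frobenius for nonnegative matrices gives a nonzero vector $\mathbf{y}\ge 0$ with $Q\mathbf{y}=\rho(Q)\mathbf{y}$. Then $A(S\mathbf{y})=SQ\mathbf{y}=\rho(Q)S\mathbf{y}$. Here $S\mathbf{y}\neq 0$ because the columns of $S$ have disjoint nonempty supports, so $S$ has full column rank. Hence $\rho(Q)$ is an eigenvalue of $A$, and so $\rho(Q)\le\rho(G)$.

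\emph{Step 2: $\rho(G)\le\rho(Q)$.} Let $\mathbf{x}\ge 0$ be a Perron vector of $G$, which exists by Perron--Frobenius as recalled in the introduction. Then $Q'(S'\mathbf{x})=S'A\mathbf{x}=\rho(G)S'\mathbf{x}$. The vector $S'\mathbf{x}$ has $j$-th entry $\sum_{u\in V_j}x_u$, so it is nonnegative and nonzero because $\mathbf{x}\neq 0$. Thus $\rho(G)$ is an eigenvalue of $Q'$, which has the same eigenvalues as $Q$. Hence $\rho(G)\le\rho(Q)$, and together with Step 1 this gives $\rho(G)=\rho(Q)$.

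The only step needing care is Step 1. $Q$ need not be symmetric, so I must use that the largest real eigenvalue of a nonnegative matrix is attained by a nonnegative eigenvector, not a symmetric-matrix argument. The rest is the bookkeeping identity $AS=SQ$, which is where equitability is used.
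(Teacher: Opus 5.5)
Your proof is correct. It is essentially the standard argument behind the paper's citation: the paper gives no proof of its own and simply refers to Brouwer--Haemers, where the key fact is your identity $AS=SQ$, so that eigenvectors of $Q$ lift to eigenvectors of $A$.

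A finish often seen in this literature lifts the Perron vector of $Q$ to a nonnegative eigenvector $S\mathbf{y}$ of $A$, and then uses connectedness of $G$ to conclude that its eigenvalue is $\rho(G)$. Your Step 2 instead pushes the Perron vector of $G$ down through $S'A=Q'S'$. That needs no connectedness, which suits the statement as written.

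One correction to your closing remark: the care is needed in Step 2, not Step 1.
\begin{itemize}
\item Step 1 needs no nonnegativity at all, since any eigenvector $\mathbf{y}$ of $Q$ gives $S\mathbf{y}\neq 0$ by full column rank.
\item Step 2 is where nonnegativity is essential. Take $G=2K_{2}$ with edges $\{1,2\}$, $\{3,4\}$ and the equitable parts $\{1,3\}$, $\{2,4\}$. The eigenvector $(1,1,-1,-1)'$ for $\rho(G)=1$ satisfies $S'\mathbf{x}=0$.
\end{itemize}
So it is precisely your choice of a Perron vector that guarantees $S'\mathbf{x}\neq 0$.
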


\section{A useful lemma}

Let $k\geq1$ and $n\geq k+4$ be two integers with the same parity (i.e., $n\equiv k(\mod2)$). Let $\mathcal{G}_{n,k}$ be the set of graphs $G$ of order $n$ with matching number at most $\frac{n-k}{2}$, such that there is a $S\subseteq V(G)$ with $|S|\geq2$ satisfying $c(G-S)\geq |S|+k$. Here, $c(G-S)$ denotes the number of components of $G-S$. Clearly, both $K_{\frac{n - k}{2}} \vee \frac{n + k}{2} K_{1}$  and $K_{2} \vee (K_{n - k - 3} \cup (k + 1) K_{1})$ are in $\mathcal{G}_{n,k}$.

\begin{lem}\label{main lemma}
 Let $\mathcal{G}_{n,k}$ be defined as above, where $k\geq1,n\geq k+4$ and $n\equiv k(\mod2)$. Then the following conclusions hold.\\
 $(i)$ For $n \leq 3k + 6$, $K_{\frac{n - k}{2}} \vee \frac{n + k}{2} K_{1}$ is the unique extremal graph with the maximum spectral radius in $\mathcal{G}_{n,k}$.\\
 $(ii)$ For $ n \geq 3k + 8 $,  $ K_{2} \vee (K_{n - k - 3} \cup (k + 1) K_{1})$ is the unique extremal graph with the maximum spectral radius in $\mathcal{G}_{n,k}$.
 \end{lem}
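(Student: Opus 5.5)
Let $G$ be a graph attaining the maximum spectral radius in $\mathcal{G}_{n,k}$, and fix $S\subseteq V(G)$ with $|S|=s\geq 2$ and $c(G-S)\geq s+k$. The plan is to first reduce $G$ to a small explicit family by edge addition, and then compare spectral radii within that family.

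\textbf{Reduction.} Adding edges never lowers $\rho$ (Lemma \ref{subgraph}). So we will add every edge inside $S$, every edge between $S$ and $V(G)-S$, and every edge inside a component of $G-S$. We will also merge surplus components until exactly $s+k$ remain. Each of these operations preserves the property $c(G-S)\geq |S|+k$. It also keeps the matching number at most $\frac{n-k}{2}$: by the Tutte--Berge formula, any graph with $o(G-S)-|S|\geq k$ has deficiency at least $k$.

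Parity needs care. Since $n\equiv k \pmod 2$, we have $n-s\equiv s+k \pmod 2$. Therefore the $s+k$ components can be taken odd, and merging three odd components yields another odd one. So we may assume all $s+k$ components are odd cliques. By maximality and Lemma \ref{subgraph}, $G$ is then exactly
\[
K_s\vee (K_{n_1}\cup\cdots\cup K_{n_{s+k}}),\qquad n_i \text{ odd},\ \sum_i n_i=n-s,
\]
and this graph is connected because $s\geq 1$.

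\textbf{Concentrating the components.} Next we show that, for fixed $s$, the spectral radius is maximized when all components but one are $K_1$. Suppose $n_1\geq n_2\geq 3$. Take a Perron vector $\mathbf{x}$; by symmetry, vertices in the same clique have equal entries, and vertices in a larger clique have larger entries. Move two vertices of $K_{n_2}$ into $K_{n_1}$. Concretely, for a vertex $v$ of $K_{n_2}$, delete its edges to the rest of $K_{n_2}$ and join it to $K_{n_1}$. Lemma \ref{eigenvector trans} applies because the sum of $\mathbf{x}$ over $K_{n_1}$ dominates the sum over $K_{n_2}-v$. Doing this twice keeps parity and strictly increases $\rho$. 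Hence
\[
G=G_s:=K_s\vee\bigl(K_{n-2s-k+1}\cup (s+k-1)K_1\bigr),
\]
with $2\leq s\leq \frac{n-k}{2}$. The two endpoints of this range are $s=2$, giving $K_2\vee(K_{n-k-3}\cup(k+1)K_1)$, and $s=\frac{n-k}{2}$, giving $K_{\frac{n-k}{2}}\vee\frac{n+k}{2}K_1$.

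\textbf{Comparing the $G_s$ (the main obstacle).} This final step is the hard part. The partition into $S$, the big clique, and the isolated vertices is equitable. By Theorem \ref{interlacing}, $\rho(G_s)$ is the largest root of a cubic $f_s(x)=\det(xI-Q_s)$, where
\[
Q_s=\begin{pmatrix} s-1 & n-2s-k+1 & s+k-1\\ s & n-2s-k & 0\\ s & 0 & 0\end{pmatrix}.
\]
The approach is to show that $\rho(G_s)$ is a convex-like function of $s$ on $[2,\frac{n-k}{2}]$, so that its maximum occurs at an endpoint. For this we will use the following bounds.
\begin{itemize}
\item For $3\leq s\leq \frac{n-k}{2}-1$, we will show $\rho(G_s)<\max\{\rho(G_2),\rho(G_{(n-k)/2})\}$. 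To do so, we evaluate $f_s$ at the larger endpoint value, using the estimates $\rho(G_2)>n-k-2$ (since $G_2$ contains $K_{n-k-1}$) and $\rho(G_{(n-k)/2})\geq$ the value of the complete split graph.
\item Alternatively, we compute $f_s(x)-f_2(x)$, factor out $(s-2)$, and show the remaining factor is positive for $x\geq n-k-2$ when $s\leq\frac{n-k}{2}-1$. For large $s$, we compare with $s=\frac{n-k}{2}$ in the same way.
\end{itemize}
Finally, we compare the two endpoints directly via $f_2-f_{(n-k)/2}$. The sign changes exactly between $n=3k+6$ and $n=3k+8$, giving (i) and (ii). We expect this threshold computation, which is checking the sign of an explicit polynomial in $n,k$ at the relevant root, to be the most delicate step. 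Uniqueness then follows from the strictness in Lemmas \ref{subgraph} and \ref{eigenvector trans} at each reduction step.
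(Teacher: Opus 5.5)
Your reduction to $G_s=K_s\vee\bigl(K_{n-2s-k+1}\cup(s+k-1)K_1\bigr)$ is essentially the paper's Claim 1. The genuine gap is the decisive step, comparing the $\rho(G_s)$: you only sketch it, and the sketch does not work.

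\textbf{The comparison step.} The ``convex-like'' behaviour in $s$ is asserted without argument. The first bullet gives no usable estimate for $G_{(n-k)/2}$ (it \emph{is} the complete split graph), and it presupposes knowing which endpoint is larger. The claim in the second bullet is false. Write $f_s-f_2=(s-2)q_s$. Then $q_s(n-k-2)=n^2-(2k+3)n+k^2+k-2sk-2s^2$, and for $n=k+8$, $s=3=\frac{n-k}{2}-1$ this equals $22-8k<0$ once $k\ge 3$.

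Worse, the comparison itself fails, not just the estimate. Take $k=10$, $n=18$. Then $f_3(x)=(x-8)(x^2+4x-9)$, so $\rho(G_3)=8$. Meanwhile $f_2(x)=x^3-5x^2-28x+88$ is increasing on $[6,\infty)$ with $f_2(6)<0<f_2(8)$, so $\rho(G_2)<\rho(G_3)$. The split must therefore be by the regime of $n$, not by the size of $s$.

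This is what the paper does, anchoring everything at $x=n-k-2$.
\begin{itemize}
\item For $n\le 3k+6$ it compares every $G_s$ directly with $G_{(n-k)/2}$. It writes $f_s(x)=\bigl(x-\frac{n-k-2s}{2}\bigr)\phi(x)+\frac{n-k-2s}{8}h_s(x)$, where $\phi(x)=x^2-\frac{n-k-2}{2}x-\frac{n^2-k^2}{4}$ is the quotient polynomial of the complete split graph. Here $h_s$ is linear and increasing, with $h_s(n-k-2)>0$ (checked at the endpoints $n=2s+k+2$ and $n=3k+6$). It then shows $\rho(G_{(n-k)/2})>n-k-2$. This is equivalent to $n(n-4k-8)+3k^2+8k+8<0$, which holds for $k+4\le n\le 3k+6$ and fails for every $n\ge 3k+8$; that is the true source of the threshold.
\item For $n\ge 3k+8$ it shows that $q_s$ is increasing on $x>n-k-2$ and that $q_s(n-k-2)>0$ for all $3\le s\le\frac{n-k}{2}$. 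Together with $\rho(G_2)>n-k-2$, this gives $f_s>0$ on $[\rho(G_2),\infty)$.
\end{itemize}
No convexity and no separate endpoint-versus-endpoint comparison is needed.

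\textbf{A smaller issue in the reduction.} From $n-s\equiv s+k\pmod 2$ you only get $o(G-S)\equiv s+k\pmod 2$, not $o(G-S)\ge s+k$, because the deficiency $k$ may be witnessed by a barrier other than $S$. For example, take $n=10$, $k=2$, and let $G$ be four disjoint edges $a_ib_i$ together with $u_1,u_2$ adjacent only to $a_1$, with $S=\{u_1,u_2\}$. Then $G\in\mathcal{G}_{10,2}$ (the barrier is $\{a_1\}$), but $G-S=4K_2$ has no odd component. Moreover $K_2\vee 4K_2$ has a perfect matching, so your ``by maximality'' step cannot be applied to it.

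The fix is to stop requiring the intermediate graphs to lie in $\mathcal{G}_{n,k}$. Lemmas \ref{subgraph} and \ref{eigenvector trans}, moving one vertex at a time with no parity constraint, give $\rho(G)\le\rho(G_s)$ with equality only if $G=G_s$, and $G_s\in\mathcal{G}_{n,k}$. The paper's Claim 1 is equally loose here (it asserts $G_1,G_3\in\mathcal{G}_{n,k}$), and the same fix applies.
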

 
 \f{\bf Proof.} Let $G$ be an extremal graph with the maximum spectral radius in $\mathcal{G}_{n,k}$. We shall prove $G=K_{\frac{n - k}{2}} \vee \frac{n + k}{2} K_{1}$ for $n \leq 3k + 6$, and $G=K_{2} \vee (K_{n - k - 3} \cup (k + 1) K_{1})$ for $ n \geq 3k + 8 $. Since $G\in\mathcal{G}_{n,k}$, there is a $S\subseteq V(G)$ with $|S|\geq2$ such that $c(G-S)\geq |S|+k$. Set $s=|S|$. Let $Q_{1},Q_{2},...,Q_{q}$ be all the components of $G-S$. Then $G-S=\cup_{1\leq i\leq q}Q_{q}$, where $q\geq s+k$. Since $n\geq q+s\geq2s+k$, we have $s\leq\frac{n-k}{2}$. For $1\leq i\leq q$, set $n_{i}=|Q_{i}|$. Without loss of generality, assume that $n_{1}\geq n_{2}\geq\cdots\geq n_{q}\geq1$.
 
 \medskip
 
 \f{\bf Claim 1.} $G=K_{s}\vee(K_{n+1-2s-k}\cup (s+k-1)K_{1})$, where $2\leq s\leq\frac{n-k}{2}$.
 
 \medskip
 
\f{\bf Proof of Claim 1.} We first show that $G=K_{s}\vee(\cup_{1\leq i\leq q}K_{n_{i}})$. In fact, let $G_{1}$ be the graph obtained from $G$ by adding edges, so that $d_{G_{1}}(u)=n-1$ for any $u\in S$ and $Q_{i}$ is a complete graph for each $1\leq i\leq q$. Clearly, $G_{1}\in \mathcal{G}_{n,k}$. By Lemma \ref{subgraph}, we have $\rho(G)\leq \rho(G_{1})$ with equality only if $G=G_{1}$. By the choice of $G$, we must have $G=G_{1}$. 

Now we prove $q=s+k$. In fact, if $q>s+k$, let $G_{2}$ be the graph obtained from $G$ by connecting all the vertices of $Q_{q}$ to all the vertices of $Q_{1}$. Note that $c(G_{2}-S)=c(G-S)-1\geq s+k$. Thus,  $G_{2}\in\mathcal{G}_{n,k}$. However, by Lemma \ref{subgraph}, we have $\rho(G)<\rho(G_{2})$, a contradiction to the choice of $G$. Hence $q=s+k$.

It remains to show $n_{2}= n_{3}=\cdots= n_{q}=1$. Recall that $n_{1}\geq n_{2}\geq\cdots\geq n_{q}$. It suffices to prove $n_{2}=1$. By contradiction, suppose $n_{2}\geq2$. Let $\mathbf{x}=(x_{w})$ be a Perron vector of $G$, where $x_{w}$ is the element of $\mathbf{x}$ corresponding to vertex $w$.
Since $$\sum_{w\in V(Q_{i})}\rho(G)x_{w}=(n_{i}-1)\sum_{w\in V(Q_{i})}x_{w}+n_{i}\sum_{w\in S}x_{w}$$
for $1\leq i\leq q$, we have $$\sum_{w\in V(Q_{i})}x_{w}=\frac{n_{i}}{\rho(G)+1-n_{i}}\sum_{w\in S}x_{w}.$$
Hence $$\sum_{w\in V(Q_{1})}x_{w}\geq\sum_{w\in V(Q_{2})}x_{w}.$$
Choose a vertex $v\in V(Q_{2})$. Let $G_{3}$ be the graph obtained from $G$ by deleting the edges between $v$ and $V(Q_{2})-\left\{v\right\}$, and adding all edges between $v$ and $V(Q_{1})$. Note that $c(G_{3}-S)=c(G-S)= s+k$. Thus, $G_{3}\in \mathcal{G}_{n,k}$. By Lemma \ref{eigenvector trans}, we have $\rho(G)<\rho(G_{3})$, a contradiction to the choice of $G$. Hence $n_{2}=1$.
Consequently, we have $G=K_{s}\vee(K_{n+1-2s-k}\cup (s+k-1)K_{1})$, where $2\leq s\leq\frac{n-k}{2}$. This finishes the proof of Claim 1. \hfill$\Box$

\medskip

From Claim 1, we see $G=K_{s}\vee(K_{n+1-2s-k}\cup (s+k-1)K_{1})$, where $2\leq s\leq\frac{n-k}{2}$. Now define  
\[
H_{b} = K_{b} \vee \left( K_{n - k - 2b + 1} \cup (k + b - 1) K_{1} \right),
\]  where $2\leq b\leq \frac{n-k}{2}$. 
Let $$H_{0}=H_{2}=K_{2} \vee ( K_{n - k - 3} \cup (k + 1) K_{1})$$
 and 
 $$G_{0}=H_{\frac{n-k}{2}} = K_{\frac{n - k}{2}} \vee \frac{n + k}{2} K_{1}.$$
It suffices to prove $\rho(G_{0})>\rho(H_{b})$ for any $2\leq b< \frac{n-k}{2}$ when $n \leq 3k + 6$, and $\rho(H_{0})>\rho(H_{b})$ for any $2< b\leq \frac{n-k}{2}$ when $n \geq 3k + 8$. 

The quotient matrix \( M_{H_{b}} \) of the equitable partition $\left\{V(K_{b}),V(K_{n - k - 2b + 1}),V((k + b - 1) K_{1})\right\}$ of $V(H_{b})$ is given by  
\[
M_{H_{b}} = \begin{pmatrix}
b - 1 & n - k - 2b + 1 & k + b - 1 \\
b & n - k - 2b & 0 \\
b & 0 & 0
\end{pmatrix}.
\]
The characteristic polynomial \( \Phi_{H_{b}} \) of \( M_{H_{b}} \) is  
\[
\Phi_{H_{b}}(x) = x^{3} + (b - n + k + 1)x^{2} + \left( -b^{2} - bk + 2b - n + k \right)x + b(b + k - 1)(n - 2b - k).
\]
By Lemma \ref{interlacing}, $\rho(H_{b})$ is the largest root of $\Phi_{H_{b}}(x) $. 
We prove the lemma by the following two cases.

\medskip

\f{\bf Case 1.} $n \leq 3k + 6$.

\medskip

Recall that 
\[
G_{0} = K_{\frac{n - k}{2}} \vee \frac{n + k}{2} K_{1}.
\]  
In this case, we shall prove $\rho(G_{0})>\rho(H_{b})$ for any $2\leq b< \frac{n-k}{2}$. Note that $n\geq 2b+k+2$ as $b< \frac{n-k}{2}$.
For \( 2 \leq b < \frac{n - k}{2} \), recall that $\rho(H_{b})$ is the largest root of $$\Phi_{H_{b}}(x) = x^{3} + (b - n + k + 1)x^{2} + \left( -b^{2} - bk + 2b - n + k \right)x + b(b + k - 1)(n - 2b - k).$$  
Clearly, the quotient matrix \( M_{G_{0}} \) of the equitable partition $\left\{V(K_{\frac{n-k}{2}}),V(\frac{n+k}{2}K_{1})\right\}$ of $V(G_{0})$ is  
\[
M_{G_{0}} = \begin{pmatrix}
\frac{n - k - 2}{2} & \frac{n + k}{2} \\
\frac{n - k}{2} & 0
\end{pmatrix}.
\]
The characteristic polynomial \( \Phi_{G_{0}} \) of \( M_{G_{0}} \) is  
\[
\Phi_{G_{0}}(x) = x^{2} - \frac{n - k - 2}{2}x - \frac{n^{2} - k^{2}}{4}.
\]
By Lemma \ref{interlacing},  we have  
\[
\rho(G_{0}) = \frac{n - k - 2 + \sqrt{(n - k - 2)^{2} + 4(n^{2} - k^{2})}}{4}.
\]
We now prove  
\[
\frac{n - k - 2 + \sqrt{(n - k - 2)^{2} + 4(n^{2} - k^{2})}}{4} > n - k - 2.
\]  
In other words, we need to prove  
\[
n - k - 2 + \sqrt{(n - k - 2)^{2} + 4(n^{2} - k^{2})} > 4(n - k - 2).
\]
Equivalently,  
\[
\sqrt{(n - k - 2)^{2} + 4(n^{2} - k^{2})} > 3(n - k - 2),
\]  
\[
\Leftrightarrow(n - k - 2)^{2} + 4(n^{2} - k^{2}) > 9(n - k - 2)^{2},
\]   
\[
\Leftrightarrow n^{2} - k^{2} > 2(n - k - 2)^{2},
\]  
\[
\Leftrightarrow 0 > n^{2} + 3k^{2} - 4nk - 8n + 8k + 8
= n(n - 4k - 8) + 3k^{2} + 8k + 8.
\]

Recall that \( n \geq k+4 \).  
Let \( f(x) = x(x - 4k - 8) + 3k^2 + 8k + 8 \), where \( x \in [k+4, \, 3k+6] \). The function \( f(x) \) attains its maximum only at the endpoints of the interval. Recall \( k \geq 1 \). By a calculation, we have
$f(k + 4) = -8k - 8 < 0$ and
    $f(3k + 6) = -k - 4 < 0$.
    Thus \( f(x) < 0 \) for any $k+4\leq x\leq 3k+6$. It follows that $f(n)<0$, and then $\frac{n - k - 2 + \sqrt{(n - k - 2)^{2} + 4(n^{2} - k^{2})}}{4} > n - k - 2$ holds. Thus, we obtain
\[
\rho(G_{0}) > n - k - 2.
\]

By a calculation, we have  
\[
\Phi_{H_{b}}(x) = g(x) \Phi_{G_{0}}(x) + r(x),
\]  
where  
\[
g(x) = x - \frac{n - k - 2b}{2},
\]  
and  
\[
r(x)= \frac{n - k - 2b}{8}\left[ (4k + 4b - 4)x + 8b(b + k - 1) - n^{2} + k^{2} \right].
\]
Let  
\[
h(x) = (4k + 4b - 4)x + 8b(b + k - 1) - n^{2} + k^{2}.
\]  
Since \( k \geq 1 \) and \( b \geq 2 \), we have \( 4k + 4b - 4 > 0 \). Thus, \( h(x) \) is strictly increasing.
Then, for \( x > n - k - 2 \),  
\[
h(x) > h(n - k - 2) = n(4k + 4b - 4 - n) - 3k^{2} - 4k + 4bk - 16b + 8b^{2} + 8.
\]
Recall that \( n \in [2b + k + 2, 3k + 6] \) in this case. The minimum of \( h(n - k - 2) \) is attained only at the endpoints. Noting \( b \geq 2 \) and \( k \geq 1 \), we have
$$h(2b + k + 2 - k - 2)=12b^{2} + 12kb - 24b - 4k - 4\geq16>0,$$
   and
   $$h(3k+6 - k - 2)= k(16b - 28) + 8b + 8b^{2} - 52\geq0.$$
    Hence \( h(x) > h(n - k - 2) > 0 \) for $x > n - k - 2$.
Noting \( n - 2b - k > 0 \), we have  
\[
\Phi_{H_{b}}(x) - \left( x - \frac{n - k - 2b}{2} \right) \Phi_{G_{0}}(x) = \frac{n - k - 2b}{8} h(x) > 0, 
\] for any $x > n - k - 2$. Recall that $\rho(G_{0})>n-k-2$. Note that $ \Phi_{G_{0}}(x)\geq0$ for any $x\geq\rho(G_{0})$.
Thus,
$$\Phi_{H_{b}}(x)=\left( x - \frac{n - k - 2b}{2} \right) \Phi_{G_{0}}(x) + \frac{n - k - 2b}{8} h(x) >0$$ for any $x\geq\rho(G_{0})$.
This implies that  $\rho(H_{b})<\rho(G_{0})$, as desired.

\medskip

\f{\bf Case 2.} $n \geq 3k + 8$.

\medskip

Recall that 
\[
H_{0} = K_{2} \vee \left( H_{n - k - 3} \cup (k + 1) K_{1} \right).
\]  
In this case, we shall prove $\rho(H_{0})>\rho(H_{b})$ for any  $ 2 < b \leq \frac{n - k}{2}$.
For \( 3 \leq b \leq \frac{n - k}{2} \), recall that $\rho(H_{b})$ is the largest root of $$\Phi_{H_{b}}(x) = x^{3} + (b - n + k + 1)x^{2} + \left( -b^{2} - bk + 2b - n + k \right)x + b(b + k - 1)(n - 2b - k).$$  
Clearly, the quotient matrix \( M_{H_{0}} \) of  \( H_{0} \) becomes  
\[
M_{H_{0}} = \begin{pmatrix}
1 & n - k - 3 & k + 1 \\
2 & n - k - 4 & 0 \\
2 & 0 & 0
\end{pmatrix}.
\]
The characteristic polynomial \( \Phi_{H_{0}}(x) \) of \( M_{H_{0}} \) is  
\[
\Phi_{H_{0}}(x) = x^{3} - (n - k - 3)x^{2} - (n + k)x + 2(k + 1)(n - k - 4).
\]
$\rho(H_{0})$ is the largest root of $\Phi_{H_{0}}(x)$. Since \( K_{n - k - 1} \) is a proper subgraph of \( H_{0} \), by Lemma \ref{subgraph} we have  
\[
\rho(H_{0}) > \rho(K_{n - k - 1}) = n - k - 2.
\]

Let 
\[
p(x) = \Phi_{H_{b}}(x) - \Phi_{H_{0}}(x)
\]  
\[
= (b - 2)\left[ x^{2} - (b + k)x - 2b^{2} + (n - 3k - 2)b + n + kn - k^{2} - 5k - 4 \right].
\]
Let  
\[
q(x) = x^{2} - (b + k)x - 2b^{2} + (n - 3k - 2)b + n + kn - k^{2} - 5k - 4,
\]  
implying 
$$ p(x) = (b - 2) q(x).$$
Since $n\geq2b+k$ and $n\geq3k+8$, we have $2n\geq2b+4k+8$. It follows that $2(n-k-2)\geq2b+2k+4>2(b+k)$. Thus, $q(x)$ is strictly increasing for any $x > n - k - 2$. 
So, for \( x > n - k - 2 \), we have  
\[
q(x) > q(n - k - 2) = n^{2} - (2k + 3)n + k^{2} + k - 2bk - 2b^{2}.
\]
Recall that \( 3 \leq b \leq \frac{n - k}{2} \). For $x\in[3,\frac{n - k}{2} ]$, the polynomial $n^{2} - (2k + 3)n + k^{2} + k - 2kx - 2x^{2}$ attains its minimum only at the endpoints of  $[3,\frac{n - k}{2} ]$. Recall $n\geq3k+8$.
For $x = \frac{n - k}{2}$,
\[
    \begin{aligned}
    &n^{2} - (2k + 3)n + k^{2} + k - 2kx - 2x^{2} \\
    &= n^{2} - 2kn - 3n + k^{2} + k - (n - k)k - \frac{n^{2} - 2nk + k^{2}}{2} \\
    &= \frac{1}{2}\left[ (n - 4k - 6)n + 3k^{2} + 2k \right] \\
    &\geq \frac{1}{2}\left[ (3k + 8 - 4k - 6)(3k + 8) + 3k^{2} + 2k \right] \\
    &= \frac{1}{2}\left[ (-k + 2)(3k + 8) + 3k^{2} + 2k \right] \\
    &= \frac{1}{2}\left[ -3k^{2} - 8k + 6k + 16 + 3k^{2} + 2k \right] \\
    &= 8 > 0.
    \end{aligned}
    \]
      For $x=3$,
    \[
    \begin{aligned}
    &n^{2} - (2k + 3)n + k^{2} + k - 2kx - 2x^{2} \\
    &= n^{2} - (2k+3)n+ k^{2} - 5k - 18 \\
    &\geq (3k + 8)(k + 5) + k^{2} - 5k - 18 \\
    &= 3k^{2} + 15k + 8k + 40 + k^{2} - 5k - 18 \\
    &= 4k^{2} + 18k + 22 > 0.
    \end{aligned}
    \]
Thus, 
$$n^{2} - (2k + 3)n + k^{2} + k - 2kx - 2x^{2}>0$$
 for any $x\in[3,\frac{n - k}{2} ]$.
Then $$q(x) > q(n - k - 2) = n^{2} - (2k + 3)n + k^{2} + k - 2bk - 2b^{2}>0$$
for any $3\leq b\leq\frac{n - k}{2}$ and $x> n-k-2$.
Therefore,  
\[
p(x) = (b - 2) q(x) > (b - 2) q(n - k - 2) > 0
\]
for any $x> n-k-2$.
Recall \( \rho(H_{0}) > n - k - 2 \), we have  
\[
\Phi_{H_{b}}(x) - \Phi_{H_{0}}(x)=p(x)> 0,
\]  for any $x\geq\rho(H_{0})$.
Clearly, $\Phi_{H_{0}}(x)\geq0$ for any $x\geq\rho(H_{0})$.
Thus,
$\Phi_{H_{b}}(x) > 0$
 for any $x\geq\rho(H_{0})$. This implies that $\rho(H_{b})<\rho(H_{0})$, as desired.
This completes the proof. \hfill$\Box$

\section{Proof of Theorem \ref{main}}

To prove Theorem \ref{main}, we first introduce the Gallai–Edmonds Structure Theorem.
Let $G=[X, Y]$ be a bipartite graph. For any $S\subseteq X$, let \( |N_{G}(S)| - |S| \) be called the \textit{surplus} of \( S \). The minimum surplus over all nonempty subsets of \( X\) is called the \textit{\(X\)-surplus} of the graph \( G \). 

 Let \( G \) be a graph. Let
$D(G)$ be the set of vertices $w$, such that there is a maximum matching which does not cover $w$.
Let \( B(G) \) denote the set of vertices $v$ in \( V(G) \setminus D(G) \), such that $v$ is adjacent to at least one vertex in \( D(G) \). Let \( C(G) = V(G) \setminus (D(G) \cup B(G)) \).

\begin{lem} {\rm (Gallai–Edmonds Structure Theorem \cite{LP})} For a graph \( G \) , let  \( D(G), B(G)\) and \(C(G) \) be defined as above. Then the following statements hold:
\begin{itemize}
    \item[\emph{(i)}] Each component of the subgraph induced by \( D(G) \) is factor-critical.
    \item[\emph{(ii)}] The subgraph induced by \( C(G) \) has a perfect matching.
    \item[\emph{(iii)}] After deleting the vertices of \( C(G) \) and the edges inside \( B(G) \) from \( G \), and contracting each component of \( D(G) \) into a single vertex, the resulting bipartite graph has positive surplus (as viewed from \( B(G) \)).
    \item[\emph{(iv)}] Every maximum matching of \( G \) contains a near-perfect matching of each component of \( D(G) \) and a perfect matching of each component of \( C(G) \), and each vertex in \( B(G) \) is paired with a vertex from a distinct component of \( D(G) \).
\end{itemize}
\end{lem}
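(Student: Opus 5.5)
We would not reprove the Gallai--Edmonds Structure Theorem from first principles inside this paper; it is classical and is cited from \cite{LP}. If a self-contained argument were wanted, we would follow the standard route through two auxiliary facts. The first is Gallai's lemma: a connected graph $H$ with $D(H)=V(H)$ is factor-critical. The second is the stability lemma: for every $a\in B(G)$ we have $\nu(G-a)=\nu(G)-1$, $D(G-a)=D(G)$, $B(G-a)=B(G)\setminus\{a\}$ and $C(G-a)=C(G)$, where $\nu$ denotes the matching number.

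For Gallai's lemma, suppose $H$ is connected, every vertex is missed by some maximum matching, and yet $H$ has no near-perfect matching. Choose a maximum matching $M$ and two vertices $u,v$ uncovered by $M$ with $d(u,v)$ minimal. Then $d(u,v)\geq 2$, since otherwise $uv$ could be added to $M$. Take an inner vertex $w$ of a shortest $u$--$v$ path, and a maximum matching $N$ missing $w$, chosen with $|M\triangle N|$ minimal. Comparing the alternating paths of $M\triangle N$ starting at $u$, $v$ and $w$ yields either a shorter pair of exposed vertices or a larger matching, which is a contradiction.

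For the stability lemma, note that $a\notin D(G)$, so every maximum matching covers $a$; this gives $\nu(G-a)=\nu(G)-1$. It also follows that a maximum matching of $G-a$ missing $x$ extends to a maximum matching of $G$ missing $x$, except possibly when $a$ is matched to $x$. The reverse inclusion for $D$ uses a vertex $d\in D(G)$ adjacent to $a$, together with symmetric differences of a matching missing $d$ and one missing $x$. This bookkeeping is the step we expect to be the main obstacle.

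With these two facts, delete the vertices of $B(G)$ one at a time. The stability lemma gives $\nu(G-B(G))=\nu(G)-|B(G)|$ and $D(G-B(G))=D(G)$. By the definition of $B(G)$ there are no edges between $D(G)$ and $C(G)$. Hence $G-B(G)$ is the disjoint union of $G[D(G)]$ and $G[C(G)]$, and every vertex of each component of $G[D(G)]$ can be missed by a maximum matching of that component, which gives (i) via Gallai's lemma. Every vertex of $C(G)$ lies in $C(G-B(G))$ and so is covered by all maximum matchings of $G[C(G)]$. Since $G[C(G)]$ has no vertices missed by maximum matchings, a maximum matching of $G[C(G)]$ covers all of $C(G)$, which gives (ii).

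Counting then gives
\[
\nu(G)=|B(G)|+\frac{|C(G)|}{2}+\sum_i\frac{|D_i|-1}{2},
\]
where the $D_i$ are the components of $G[D(G)]$. Since each $D_i$ is odd, a maximum matching can attain this bound only if it matches every vertex of $B(G)$ into distinct components $D_i$ and induces near-perfect matchings on the $D_i$ and a perfect matching on $C(G)$, which is (iv).

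For (iii), take a nonempty $S\subseteq B(G)$ and a component $D_j$ adjacent to $S$ in the contracted bipartite graph. Choose a maximum matching missing some vertex of $D_j$. By (iv) this matching sends $S$ injectively into components other than $D_j$, so $|N(S)|\geq|S|+1$ and the surplus is positive.
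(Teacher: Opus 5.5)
Your plan agrees with the paper, which does not prove this statement either and simply cites it from Lov\'asz--Plummer; your optional sketch is the standard proof from that source (Gallai's lemma plus the stability lemma for $a\in B(G)$), and your derivation of (i)--(iv) from those two facts is correct. One wording fix in the stability paragraph: the immediate inclusion is $D(G)\subseteq D(G-a)$ (delete the matching edge at $a$), while the inclusion that needs a neighbour $d\in D(G)$ of $a$ is $D(G-a)\subseteq D(G)$, because a maximum matching of $G-a$ need not extend by a single edge at $a$ (for example, take $a$ joined to one vertex of each of two disjoint triangles).
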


 $\emptyset$ is stipulated as an independent set. Using Gallai–Edmonds Structure Theorem, Niu, Lian and Wang \cite{NLW} obtained the following result. 

\begin{lem}{\rm (\cite{NLW})}\label{C(G)=0}  
Let \( G \) be a connected graph without a perfect matching. Then \( G \) is a maximum matching covered graph if and only if \( C(G) = \varnothing \) and \( B(G) \) is an independent set.
\end{lem}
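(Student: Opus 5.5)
We prove both directions from the Gallai–Edmonds Structure Theorem. The structural facts we rely on are the following. By the definition of $B(G)$ there are no edges between $C(G)$ and $D(G)$. Every maximum matching has the rigid form described in part (iv). Since $G$ has no perfect matching, some vertex is missed by a maximum matching, so $D(G)\neq\varnothing$.

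\emph{Necessity.} Assume $G$ is maximum matching covered.

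First, $B(G)$ is independent. If $uv$ were an edge with $u,v\in B(G)$, then by (iv) every maximum matching pairs $u$ and $v$ with vertices of $D(G)$. So no maximum matching contains $uv$, a contradiction.

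Second, $C(G)=\varnothing$. Suppose $C(G)\neq\varnothing$. Since $D(G)\neq\varnothing$ and $G$ is connected, some edge $cb$ leaves $C(G)$. As $C(G)$ has no neighbours in $D(G)$, we get $b\in B(G)$. By (iv), every maximum matching restricts to a perfect matching of $G[C(G)]$ and matches $b$ into $D(G)$. Hence $cb$ lies in no maximum matching, again a contradiction.

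\emph{Sufficiency.} Assume $C(G)=\varnothing$ and $B(G)$ is independent. Then every edge either lies inside a component $D_i$ of $G[D(G)]$, or joins $B(G)$ to $D(G)$. By (iv), a matching has maximum size exactly when it consists of:
\begin{itemize}
\item a matching of $B(G)$ into distinct components of $D(G)$, and
\item a near-perfect matching of every component $D_j$ that avoids the vertex matched to $B(G)$, if there is one.
\end{itemize}
Such a matching has size $|B(G)|+\sum_j(|D_j|-1)/2$. So for each edge it suffices to build a matching of this form containing it.

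The key tool is a Hall-type consequence of (iii). Contract each $D_j$ to a vertex. Fix one contracted vertex $D_i$ and any $S\subseteq B(G)$ in the bipartite graph of (iii). Positive surplus gives
\[
|N(S)\setminus\{D_i\}|\geq |N(S)|-1\geq |S|.
\]
By Hall's theorem, any subset of $B(G)$ can therefore be matched into distinct components other than $D_i$.

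\emph{Edges inside a component.} Let $uv$ be an edge of $D_i$. We need a near-perfect matching of $D_i$ containing $uv$. Since $D_i$ is factor-critical, $D_i-u$ has a perfect matching $N$. Let $v'$ be the partner of $v$ in $N$, and replace $vv'$ by $uv$. This gives a matching of $D_i$ of size $(|D_i|-1)/2$ that contains $uv$ and misses only $v'$. Now match all of $B(G)$ into components other than $D_i$, using the Hall consequence above. In each other component $D_j$, take a near-perfect matching missing the vertex used by $B(G)$, or missing an arbitrary vertex if $D_j$ is unused. This is possible by factor-criticality. The result is a maximum matching containing $uv$.

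\emph{Edges between $B(G)$ and $D(G)$.} Let $bw$ be an edge with $b\in B(G)$ and $w\in D_i$. Match $B(G)-b$ into components other than $D_i$, again by the Hall consequence, and add the edge $bw$. In $D_i$ take a perfect matching of $D_i-w$. Complete the other components exactly as before. This is again a maximum matching, now containing $bw$.

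The main obstacle is the Hall-type step: positive surplus must be converted into the statement that $B(G)$ can be matched while avoiding a prescribed component. The local factor-critical swap is routine once one starts from a perfect matching of $D_i-u$.
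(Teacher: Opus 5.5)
Your proof is correct. The paper gives no proof of this lemma. It imports the result from Niu, Lian and Wang with only the remark that it comes from the Gallai--Edmonds Structure Theorem, so your argument supplies the derivation the paper leaves to the reference. There is no in-paper route to compare against.

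In the necessity direction you rely only on part (iv), together with two facts: $D(G)\neq\varnothing$ (from the absence of a perfect matching), and connectedness (to force an edge between $C(G)$ and $B(G)$). Both hypotheses are genuinely needed. A graph with a perfect matching that is $1$-extendable, or the disconnected graph $K_2\cup K_1$, is maximum matching covered while having $C(G)\neq\varnothing$.

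In the sufficiency direction, the one non-routine step is converting the positive surplus of (iii) into a matching of $B(G)$ (or of $B(G)-b$) into components other than a prescribed $D_i$. You handle this correctly via $|N(S)\setminus\{D_i\}|\ge |S|$ and Hall's theorem. Lifting back to $G$ is valid because the chosen endpoints lie in distinct components. The count $\nu(G)=|B(G)|+\sum_j(|D_j|-1)/2$ when $C(G)=\varnothing$ certifies that your matchings are maximum; the paper itself uses this same formula later as $\frac{n-(q-b)}{2}$.

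Your argument never uses part (ii). Part (i) is used only for the swap $vv'\mapsto uv$ inside $D_i$ and for completing the near-perfect matchings of the other components.
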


Now we are ready to prove Theorem \ref{main}

\medskip

\f{\bf Proof of Theorem \ref{main}.} Recall that $k\geq1,n\geq k+4$ and $n\equiv k(\mod2)$. Observe that both $K_{\frac{n - k}{2}} \vee \frac{n + k}{2} K_{1}$ and $K_{2} \vee (K_{n - k - 3} \cup (k + 1) K_{1})$  have matching number $\frac{n-k}{2}$ (see Berge-Tutte Formula \cite{W}). Clearly, they both are not maximum matching covered, since an edge inside $V(K_{\frac{n-k}{2}})$ or $V(K_{2})$ is not contained in a maximum matching. 
  Recall that $\rho(G)\geq\rho(K_{\frac{n - k}{2}} \vee \frac{n + k}{2} K_{1})$ for $n \leq 3k + 6$, and $\rho(G)\geq\rho(K_{2} \vee (K_{n - k - 3} \cup (k + 1) K_{1}))$ for $ n \geq 3k + 8 $.
Suppose that  $G$  is not maximum matching covered. It suffices to prove $G=K_{\frac{n - k}{2}} \vee \frac{n + k}{2} K_{1}$ for $n \leq 3k + 6$, and $G=K_{2} \vee (K_{n - k - 3} \cup (k + 1) K_{1})$ for $ n \geq 3k + 8 $. Recall that $G$ has matching number at most $\frac{n-k}{2}$. Thus $G$ contains no perfect matching.

Let $D(G), B(G)$ and $C(G)$ be the partition of $V(G)$ in the Gallai–Edmonds Structure Theorem. Clearly, there are no edges between $D(G)$ and $C(G)$. Let $Q_{1},Q_{2},...,Q_{q}$ be the components of the subgraph induced by $D(G)$. Note that $|Q_{i}|$ is odd for any $1\leq i\leq q$, since $Q_{i}$ is factor-critical. Set $b=|B(G)|$. 
From the construction of maximum matchings in Gallai–Edmonds Structure Theorem, we see that $G$ has matching number $\frac{n-(q-b)}{2}$. Thus, $\frac{n-(q-b)}{2}\leq\frac{n-k}{2}$, implying 
$$q\geq b+k.$$
 Since $G$ is connected and not maximum matching covered, by Lemma \ref{C(G)=0}, we have $C(G)\neq\emptyset$ or $B(G)$ is not independent. Thus, we have the following two cases.

\medskip

\f{\bf Case 1.}  $C(G)\neq\emptyset$.

\medskip

Note that $|C(G)|\geq2$ in this case, since $|C(G)|$ is even. Since $q\geq k+b\geq1$ and $G$ is connected, we must have $b\geq1$ as $C(G)\neq\emptyset$. Now choose a vertex $u\in C(G)$. Let $S=B(G)\cup\left\{u\right\}$. Then $|S|\geq2$. Moreover, noting $C(G)-\left\{u\right\}\neq\emptyset$, we have $c(G-S)\geq q+1\geq b+k+1=|S|+k$. It follows that $G\in\mathcal{G}_{n,k}$ (defined in Lemma \ref{main lemma}). By Lemma \ref{main lemma},
$K_{\frac{n - k}{2}} \vee \frac{n + k}{2} K_{1}$ is the unique graph with the maximum spectral radius in $\mathcal{G}_{n,k}$ for $n \leq 3k + 6$, and $K_{2} \vee (K_{n - k - 3} \cup (k + 1) K_{1})$  is the unique graph with the maximum spectral radius in $\mathcal{G}_{n,k}$ for $n \geq 3k + 8 $. Thus, we must have $G=K_{\frac{n - k}{2}} \vee \frac{n + k}{2} K_{1}$ for $n \leq 3k + 6$, and $G=K_{2} \vee (K_{n - k - 3} \cup (k + 1) K_{1})$ for $ n \geq 3k + 8 $, as desired.

\medskip

\f{\bf Case 2.}  $B(G)$ is not independent.

\medskip

Recall that $\emptyset$ is stipulated as an independent set. Thus $b\geq2$ as $B(G)$ is not independent. Recall that $c(G-B(G))\geq q\geq b+k$. Thus, $G\in\mathcal{G}_{n,k}$ (defined in Lemma \ref{main lemma}). Similar to Case 1, we have $G=K_{\frac{n - k}{2}} \vee \frac{n + k}{2} K_{1}$ for $n \leq 3k + 6$, and $G=K_{2} \vee (K_{n - k - 3} \cup (k + 1) K_{1})$ for $ n \geq 3k + 8 $, as desired. This completes the proof. 
\hfill$\Box$

\medskip

\f{\bf Data availability statement}

\medskip

There is no associated data.

\medskip

\f{\bf Declaration of Interest Statement}

\medskip

There is no conflict of interest.

\end{document}